\theoremstyle{definition}
\newtheorem{theorem}{Theorem}
\newtheorem{lemma}[theorem]{Lemma}
\newtheorem{corollary}[theorem]{Corollary}
\newtheorem{definition}[theorem]{Definition}
\newtheorem{remark}[theorem]{Remark}
\newtheorem{example}[theorem]{Example}
\newtheorem{notation}[theorem]{Notation}
\DeclareMathOperator{\End}{End}
 \def\<{\langle}
  \def\>{\rangle}
\newcommand{\Hdeg}{\mathrm{H}^{\mathrm{deg}}} 
\newcommand{\sv}{\mathop{\mathrm{sV}\hspace{-2.5mm}\mathrm{V}}\nolimits} 
\newcommand{\ab}{(\underline{a}, \underline{b})}
\newcommand{\K}{\mathbb{C}}
\newcommand{\N}{\mathbb{N}}
\newcommand{\diag}{\mathrm{diag}}
\newcommand{\Id}{\mathrm{Id}}
\newcommand{\GL}{\mathrm{GL}}
\begin{document}

\title[On calibrated representations of $\sv_2$]{On calibrated representations of the degenerate affine periplectic Brauer algebra}
 
\author{Zajj Daugherty}
\address[Z.D.]{Department of Mathematics, City College of New York, New York, NY 10031 USA}
\email{zdaugherty@gmail.com}
\author{Iva Halacheva}
\address[I.V.]{School of Mathematics and Statistics, University of Melbourne, Parkville VIC 3010
    Australia}
\email{iva.halacheva@unimelb.edu.au}
\author{Mee Seong Im}
\address[M.S.I.]{Department of Mathematical Sciences, United States Military Academy, West Point, NY 10996 USA}
\email{meeseongim@gmail.com}
\author{Emily Norton}
\address[E.N.]{Max Planck Institute for Mathematics, 53111 Bonn
Germany}
\email{enorton@mpim-bonn.mpg.de}

%\date{\today}

%\subjclass[2000]{Primary: 16G99, 17B99}
%\keywords{periplectic Brauer algebra, diagram algebra, degenerate affine Hecke algebra, periplectic Lie superalgebra, calibrated representations}

\begin{abstract}
We initiate the representation theory of the degenerate affine periplectic Brauer algebra on $n$ strands by constructing its finite-dimensional calibrated representations when $n=2$. We show that any such representation that is indecomposable and does not factor through a representation of the degenerate affine Hecke algebra occurs as an extension of two semisimple representations with one-dimensional composition factors; and furthermore, we classify such representations with regular eigenvalues up to isomorphism.
\end{abstract}  
\maketitle

\section*{Introduction}

The degenerate affine periplectic Brauer algebra on $n$ strands, or $\sv_n$ for short, belongs to a family of diagram algebras playing various roles in generalized Schur-Weyl dualities. Related algebras include the Brauer algebra, periplectic Brauer algebra, degenerate affine Hecke algebra, Nazarov--Wenzl algebra, and walled Brauer algebra. The algebra $\sv_n$ was first defined by Chen and Peng by generators and relations  \cite{ChenPeng}\footnote{where it was called ``affine periplectic Brauer algebra"}, and in previous work of the authors together with other collaborators as the endomorphism algebra of the object $n$ in a certain monoidal supercategory \cite{us+}\footnote{where it was called ``affine VW superalgebra"}. That monoidal supercategory arises from the representation theory of the periplectic Lie superalgebra, hence the word ``periplectic," while ``degenerate affine" indicates the close relationship of $\sv_n$ with the degenerate affine Hecke algebra $\Hdeg_n$, which is a quotient of $\sv_n$. Like $\Hdeg_n$, the algebra $\sv_n$ contains a large polynomial subalgebra $\K[y_1,\dots,y_n]$ which provides a point of leverage for its representation theory.

 In the work at hand, we begin the representation theory of the algebras $\sv_n$ with the smallest nontrivial example of these algebras, namely $\sv_2$.  
Our goal in this paper is to explicitly construct finite-dimensional \textit{calibrated representations}, that is, representations of $\sv_2$ on which the polynomial subalgebra $\K[y_1,y_2]$ acts diagonalizably. Our approach here is very concrete: write down matrices for the action of the generators of $\sv_2$, find conditions on these matrices for the representation to be indecomposable, and determine when two such indecomposable representations are isomorphic. 

The representations of $\sv_2$ that we focus on are the ones that cannot be obtained as representations of $\Hdeg_2$ by setting the Temperley-Lieb type generator $e$ equal to $0$, since calibrated representations of (degenerate) affine Hecke algebras in small rank are known by work of Ram \cite{Ram}.
In Section \ref{calibrated} we give a recipe for producing ``new" calibrated representations of $\sv_2$, i.e. ones on which $e$ does not act by $0$. Then we show that our recipe produces all such indecomposable calibrated representations; this is Theorem \ref{main1}. Theorem \ref{main1} implies that an indecomposable finite-dimensional calibrated representation with nonzero action of $e$ always occurs as an extension of two semisimple representations with $1$-dimensional composition factors (Corollary \ref{ext}).
In Theorems \ref{main2}, \ref{iso1}, and \ref{iso2} we completely classify the indecomposable finite-dimensional calibrated representations of $\sv_2$  up to isomorphism on which $y_1$ and $y_2$ act with regular eigenvalues. In addition to the eigenvalues, the other classifying device turns out to be an unexpected yet natural class of matrices that we name rhizomatic, see Section \ref{rhizomatic}.

We expect that some of the ideas in this paper will generalize to the case $n>2$, but the algebras $\sv_n$ for $n>2$ are considerably more complicated, so we also expect that more work and possibly more ideas will be needed to deal with their calibrated representations.

\section{Definitions}

The degenerate affine Hecke algebra $\Hdeg_n$ was introduced by Drinfeld \cite{Drinfeld} and Lusztig \cite{Lusztig1}. It contains $\K[y_1,\dots,y_n]$ and $\K S_n$ as subalgebras, and together they generate $\Hdeg_n$. We recall its generators and relations in the case $n=2$.%it should be fine to take $p\geq 0$, $p\neq 2$.?

\begin{definition}\cite{Drinfeld} The degenerate affine Hecke algebra $\Hdeg_2$ is the $\K$-algebra generated by $s$, $y_1$, and $y_2$ with relations:
$$s^2=1,\quad y_1y_2=y_2y_1,\quad sy_1=y_2s-1, \quad sy_2=y_1s+1.$$
\end{definition}
\noindent Multiplying both sides of the third relation by $s$ we get the fourth relation and vice versa, but it can be convenient to use this bigger set of relations. 

\begin{definition} \cite[Definition 39]{us+}
The degenerate affine periplectic Brauer algebra $\sv_2$ is the $\K$-algebra generated by $s$, $y_1$, $y_2$, and $e$ with relations:
\begin{align*}
&s^2=1,\quad y_1y_2=y_2y_1,\quad sy_1=y_2s-1-e, \quad sy_2=y_1s+1-e,\\ 
&\quad e^2=0,\quad es=e,\quad se=-e,\quad ey_2=ey_1+e,\quad y_1e=y_2e+e. 
\end{align*}
\end{definition}
\noindent Again, this is not a minimal set of generators and relations but it is convenient to use this bigger set. Notice that $e$ is generated by $y_1$, $y_2$, and $s$. It follows from the relations given that $ef(y_1,y_2)e=0$ for any polynomial $f(y_1,y_2)\in\K[y_1,y_2]$, see \cite[Lemma 48]{us+}.

We cannot hope to classify indecomposable representations of $\Hdeg_2$ or $\sv_2$ in general. However, we may hope to classify a well-behaved subset of indecomposable representations: those finite-dimensional indecomposable representations on which $y_1$ and $y_2$ act diagonalizably. 

\begin{definition} Let $H$ be $\Hdeg_2$ or $\sv_2$. A representation $V$ of $H$ is called \textit{calibrated} if $V$ has a basis with respect to which the actions of  $y_1$ and $y_2$ on $V$ are given by diagonal matrices.

\end{definition}

\begin{notation}
We denote by $M_{m\times n}(\K)$ the ring of $m \times n$ matrices with entries in $\K$. We write $\K^{k+\ell}$ for the $\K$-vector space of dimension $k+\ell$ whose vectors $\ab$ are viewed as the concatenation of a vector $\underline{a}=(a_1,\dots,a_k)$ of length $k$ and a vector $\underline{b}=(b_1,\dots,b_\ell)$ of length $\ell$. 
\end{notation}

\section{Calibrated representations of $\sv_2$}\label{calibrated}
In this section we construct the calibrated representations of $\sv_2$. The starting point is the obvious relationship to the degenerate affine Hecke algebra:
\begin{lemma}\label{degHecke} Let $V$ be a representation of $\sv_2$ on which $e$ acts by $0$. Then the action of $\sv_2$ on $V$ factors through $\Hdeg_2\cong \sv_2/\langle e\rangle$. Conversely, if $W$ is a representation of $\Hdeg_2$ then we may extend $W$ to a representation of $\sv_2$ by declaring $e$ to act by $0$.
\end{lemma}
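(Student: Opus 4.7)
The plan is to verify both directions by directly comparing the defining relations of $\sv_2$ and $\Hdeg_2$. Essentially, the lemma says that the two-sided ideal generated by $e$ in $\sv_2$ cuts out exactly the extra data distinguishing $\sv_2$ from $\Hdeg_2$, so I first establish the isomorphism $\sv_2/\langle e\rangle\cong \Hdeg_2$ claimed in the statement, and then deduce both directions as corollaries.

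For the isomorphism $\sv_2/\langle e\rangle \cong \Hdeg_2$, I would look at the generators $s, y_1, y_2, e$ of $\sv_2$ modulo $\langle e\rangle$ and observe that the images of $s, y_1, y_2$ generate the quotient. Going through the relations: $s^2=1$ and $y_1y_2=y_2y_1$ survive unchanged; the relations $sy_1=y_2s-1-e$ and $sy_2=y_1s+1-e$ become $sy_1=y_2s-1$ and $sy_2=y_1s+1$ after killing $e$; and the remaining relations $e^2=0$, $es=e$, $se=-e$, $ey_2=ey_1+e$, $y_1e=y_2e+e$ all become $0=0$. This exactly matches the presentation of $\Hdeg_2$, so the universal property of the $\Hdeg_2$-presentation gives a map $\Hdeg_2\to \sv_2/\langle e\rangle$ which is inverse to the obvious surjection from the $\sv_2$ side.

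Given this, the first claim is immediate: if $e$ acts by zero on $V$, then the $\sv_2$-action annihilates $\langle e\rangle$ and hence factors through $\sv_2/\langle e\rangle\cong \Hdeg_2$. For the converse, I would define the $\sv_2$-action on $W$ by letting $s, y_1, y_2$ act as they already do and $e$ act as $0$, then check each $\sv_2$-relation. The relations not involving $e$ reduce to the $\Hdeg_2$-relations and hold by assumption; the relations involving $e$ reduce to $0=0$ once $e$ is set to zero on both sides.

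No step is a real obstacle here; the only thing to be careful about is verifying that every $\sv_2$-relation either lives in $\Hdeg_2$ or becomes trivial, which is just a bookkeeping check against the list in the definition. The statement $ef(y_1,y_2)e=0$ noted after the definition is not needed, since it is a consequence of the relations rather than an independent relation to verify.
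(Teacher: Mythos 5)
Your proof is correct and is exactly the routine presentation-comparison argument the authors have in mind; the paper in fact states this lemma without proof, treating it as immediate. Nothing is missing: killing $e$ turns the $\sv_2$ relations into the $\Hdeg_2$ relations (or into $0=0$), and both directions follow.
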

\noindent The calibrated representations of the affine Hecke algebra are known by work of Ram \cite{Ram}. As remarked by Suzuki in his study of $\Hdeg_n$-representations \cite{Suzuki}, Lusztig's  work \cite{Lusztig1},\cite{Lusztig2} shows that the representation theory of degenerate affine Hecke algebras and affine Hecke algebras can be recovered from each other. Calibrated representations of $\Hdeg_2$ may therefore be considered as known.
To classify calibrated representations of $\sv_2$, we then need to construct those on which $e$ does not act by $0$. We will do this by deforming certain calibrated representations of $\Hdeg_2$.
\begin{definition}
For any $a\in\K$, let $V_a^+$ be the one-dimensional $\Hdeg_2$-representation on which $y_1$ acts by multiplication by $a$ and $s$ acts trivially; let $V_a^-$ be defined similarly except $s$ acts by multiplication by $-1$.
\end{definition}
\noindent Using Lemma \ref{degHecke} and observing that $e^2=0$ forces $e$ to act by $0$ on any one-dimensional representation, we have:
\begin{lemma}
The one-dimensional representations of $\sv_2$ are exactly $\{V_a^+,\; V_a^-\mid a\in\K\}$.
\end{lemma}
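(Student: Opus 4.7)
The plan is to reduce to the one-dimensional representations of $\Hdeg_2$ via Lemma \ref{degHecke}, and then show that every such representation is of the form $V_a^{\pm}$. First I would let $V$ be any one-dimensional representation of $\sv_2$. Since $\dim V = 1$, every generator acts by a scalar; in particular $e$ acts by some $\lambda \in \K$. The relation $e^2 = 0$ then forces $\lambda^2 = 0$, hence $\lambda = 0$, so $e$ acts by zero on $V$. By Lemma \ref{degHecke}, the $\sv_2$-action on $V$ factors through $\Hdeg_2$, so it suffices to classify one-dimensional representations of $\Hdeg_2$.

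Next, on a one-dimensional $\Hdeg_2$-module the generator $s$ acts by a scalar $\mu$ with $\mu^2 = 1$, so $\mu \in \{+1,-1\}$, and $y_1$ acts by some scalar $a \in \K$. The relation $sy_1 = y_2 s - 1$ then forces $y_2$ to act by the scalar $a + 1/\mu$, which equals $a+1$ when $\mu = 1$ and $a - 1$ when $\mu = -1$. Thus $V$ is isomorphic to $V_a^+$ or $V_a^-$, respectively, provided these data indeed extend to a representation. The main (though routine) obstacle is just to verify the consistency of this assignment against the remaining relations. Concretely, one must check that the fourth relation $s y_2 = y_1 s + 1$ holds in both cases, and that $ey_2 = ey_1 + e$ and $y_1 e = y_2 e + e$ are satisfied: the latter two are automatic since $e$ acts by $0$, and the former is a direct substitution that holds by construction. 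This confirms that $V_a^{\pm}$ exist as claimed and that every one-dimensional representation of $\sv_2$ is of this form.
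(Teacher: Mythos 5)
Your proposal is correct and follows exactly the paper's (very terse) argument: $e^2=0$ forces $e$ to act by $0$ on any one-dimensional representation, so by Lemma \ref{degHecke} the action factors through $\Hdeg_2$, whose one-dimensional representations are precisely the $V_a^{\pm}$. The only difference is that you spell out the routine verification of the remaining relations, which the paper omits.
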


Now let $k, \ell\in \N$. Let $S$ be any $k\times \ell$ matrix. Then there is a $(k+\ell)$-dimensional calibrated $\Hdeg_2$-representation $W_{k,\ell}(S)$ which fits into a short exact sequence:

$$ 0\rightarrow (V_0^-)^{\oplus k} \rightarrow W_{k,\ell}(S)\rightarrow (V_{-1}^+)^{\oplus \ell}\rightarrow 0,$$
where $y_1$ acts on $W_{k,\ell}(S)$ by the diagonal matrix $Y_1$ with $0$'s in the first $k$ diagonal entries and $-1$'s in the last $\ell$ diagonal entries, and $s$ acts by the block matrix $\widetilde{S}=\begin{pmatrix}-\Id_k & S\\ 0 & \Id_\ell\end{pmatrix}$. Using the relation $sy_1s+s=y_2$, we get that $y_2$ acts on $W_{k,\ell}(S)$ by the diagonal matrix $Y_2$ with $-1$'s in the first $k$ diagonal entries and $0$'s in the last $\ell$ diagonal entries.

\begin{example} Let $k=3$ and $\ell=2$, and let $S=\begin{pmatrix}s_{11} &s_{12}\\s_{21}&s_{22}\\s_{31}&s_{32}\end{pmatrix}$. Then the actions of $y_1$, $y_2$, and $s$ on $W_{3,2}(S)$ are given by the following matrices:

$$\small 
Y_1=
\left(  
\begin{array}{ccc;{1pt/1pt}cc}
0&0&0&0&0\\
0&0&0&0&0\\
0&0&0&0&0\\
\hdashline[1pt/1pt]
0&0&0&-1&0\\
0&0&0&0&-1
\end{array}
\right),  
\quad 
Y_2=
\left( 
\begin{array}{ccc;{1pt/1pt}cc}
-1&0&0&0&0\\
0&-1&0&0&0\\
0&0&-1&0&0\\
\hdashline[1pt/1pt]
0&0&0&0&0\\
0&0&0&0&0
\end{array}
\right), 
\quad
\widetilde{S}=
\left( 
\begin{array}{ccc;{1pt/1pt}cc}
-1&0&0&s_{11}&s_{12}\\
0&-1&0&s_{21}&s_{22}\\
0&0&-1&s_{31}&s_{32}\\
\hdashline[1pt/1pt]
0&0&0&1&0\\
0&0&0&0&1
\end{array}
\right). 
$$
\end{example}

Now let $\ab:=(a_1,a_2,\dots,a_k,b_1,b_2,\dots,b_\ell)\in\K^{k+\ell}$ be any $(k+\ell)$-tuple of complex numbers. We may build a calibrated representation of $\sv_2$ from $W_{k,\ell}(S)$ and $\ab$.
\begin{lemma}
 Let $V_{k,\ell}(S;\ab)$ be a $(k+\ell)$-dimensional $\K$-vector space and consider the following matrices in $\End_\K(V_{k,\ell}(S;\ab)$:
 $$y_1=Y_1+\diag\ab, \quad y_2=Y_2+\diag\ab, \quad s=\widetilde{S},\quad e=-sy_2+y_1s+\Id_{k+\ell}.$$ 
 Then $V_{k,\ell}(S;\ab)$ is a calibrated representation of $\sv_2$ on which $y_1,y_2,s,e$ act by the matrices with the same names.
\end{lemma}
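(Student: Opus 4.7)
The plan is to verify all the defining relations of $\sv_2$ directly. The calibration condition is free: $y_1 = Y_1 + \diag\ab$ and $y_2 = Y_2 + \diag\ab$ are explicitly diagonal in the standard basis. The only point requiring any thought is the relations involving $e$; everything else is inherited from the $\Hdeg_2$-representation $W_{k,\ell}(S)$.

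I would begin by recording the $\Hdeg_2$-relations that the matrices $Y_1,Y_2,\widetilde{S}$ satisfy on $W_{k,\ell}(S)$, namely $\widetilde{S}^2 = \Id$, $\widetilde{S} Y_1 = Y_2\widetilde{S} - \Id$, and $\widetilde{S} Y_2 = Y_1\widetilde{S} + \Id$. Setting $D := \diag\ab$, which commutes with the diagonal matrices $Y_1$ and $Y_2$, the defining formula for $e$ expands as
\[
e = -\widetilde{S}(Y_2 + D) + (Y_1 + D)\widetilde{S} + \Id = \bigl(Y_1\widetilde{S} - \widetilde{S} Y_2 + \Id\bigr) + \bigl(D\widetilde{S} - \widetilde{S} D\bigr) = [D,\widetilde{S}],
\]
the first parenthesized expression vanishing by $\widetilde{S} Y_2 = Y_1\widetilde{S} + \Id$. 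Writing $D = \begin{pmatrix} A & 0 \\ 0 & B\end{pmatrix}$ with $A = \diag(a_1,\dots,a_k)$ and $B = \diag(b_1,\dots,b_\ell)$, the commutator is the strictly block-upper-triangular matrix
\[
e = \begin{pmatrix} 0 & AS - SB \\ 0 & 0\end{pmatrix}.
\]
This explicit description is the hub for everything that follows.

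From here each remaining relation becomes a brief block-matrix check. The relation $sy_2 = y_1 s + 1 - e$ is the definition of $e$ rearranged, and $sy_1 = y_2 s - 1 - e$ follows by the analogous expansion using $\widetilde{S} Y_1 = Y_2\widetilde{S} - \Id$. The relations $s^2 = 1$ and $y_1 y_2 = y_2 y_1$ are clear. The nilpotency $e^2 = 0$ is immediate from strict block-upper-triangularity. For $es = e$ and $se = -e$: right-multiplying $e$ by $\widetilde{S}$ contacts only the lower-right $\Id_\ell$ block, while left-multiplying contacts only the upper-left $-\Id_k$ block. Finally, $ey_2 - ey_1 = e(Y_2 - Y_1) = e$ because $Y_2 - Y_1$ acts as the identity on the lower $\ell$ coordinates where the nonzero entries of $e$ live; the relation $y_1 e = y_2 e + e$ is symmetric.

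The proof is purely computational; the one nontrivial step is the identification $e = [D,\widetilde{S}]$, which telescopes the $Y_i$-contributions via the $\Hdeg_2$-relation. Once this is in hand, the transparent block form of $e$ reduces every remaining relation to inspection.
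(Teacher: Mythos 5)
Your proof is correct and follows the same route as the paper, which simply records the block form of $e$ with entries $e_{ij}=(a_i-b_j)s_{ij}$ and declares the remaining relation checks a straightforward matrix computation; you supply those details, and the identification $e=[D,\widetilde{S}]$ is a clean way to organize them.
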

\begin{proof} The matrix $e$ is a block matrix $e=\begin{pmatrix}0&E\\0&0\end{pmatrix}$
where $E$ is a $k\times \ell$ matrix with entries  $e_{ij}=(a_i-b_j)s_{ij}.$ It is then a straightforward computation with matrices to check that the defining relations of $\sv_2$ are satisfied.\end{proof}

\begin{example}
For $k=3$ and $\ell=2$ the matrices look like:
\begin{align*}&y_1=
\left( 
\begin{array}{ccc;{1pt/1pt}cc}
a_1&0&0&0&0\\
0&a_2&0&0&0\\
0&0&a_3&0&0\\
\hdashline[1pt/1pt]
0&0&0&b_1-1&0\\
0&0&0&0&b_2-1
\end{array}
\right), 
\quad
y_2=
\left(
\begin{array}{ccc;{1pt/1pt}cc}
a_1-1&0&0&0&0\\
0&a_2-1&0&0&0\\
0&0&a_3-1&0&0\\
\hdashline[1pt/1pt]
0&0&0&b_1&0\\
0&0&0&0&b_2
\end{array}
\right), 
\\
&s=
\left(
\begin{array}{ccc;{1pt/1pt}cc}
-1&0&0&s_{11}&s_{12}\\
0&-1&0&s_{21}&s_{22}\\
0&0&-1&s_{31}&s_{32}\\
\hdashline[1pt/1pt]
0&0&0&1&0\\
0&0&0&0&1
\end{array}
\right), 
\quad
e=
\left( 
\begin{array}{ccc;{1pt/1pt}cc}
0&0&0&(a_1-b_1)s_{11}&(a_1-b_2)s_{12}\\
0&0&0&(a_2-b_1)s_{21}&(a_2-b_2)s_{22}\\
0&0&0&(a_3-b_1)s_{31}&(a_3-b_2)s_{32}\\
\hdashline[1pt/1pt]
0&0&0&0&0\\
0&0&0&0&0
\end{array}
\right). 
\end{align*}
\end{example}

We can then think of the family of calibrated representations of $\sv_2$ constructed by this procedure as being parametrized by pairs consisting of a $\Hdeg_2$-representation $W_{k,\ell}(S)$ as above together with a vector $\ab\in\K^{k+\ell}$; equivalently, by pairs $(S,\ab)$ consisting of a $k\times \ell$ matrix $S\in M_{k\times \ell}(\K)$ and a vector $\ab\in\K^{k+\ell}$. When we take $\ab$ to be the $0$-vector, then $e$ is the $0$ matrix, $y_1=Y_1$, $y_2=Y_2$, $s=\widetilde{S}$, and so we get back the representation $W_{k,\ell}(S)$ of $\Hdeg_2$. %, so in a sense these are deformations of $W_{k,\ell}(S)$.
Note that nonzero choices of $\ab$ may produce representations on which $e$ acts by $0$: for example, taking $\ab=(a,\dots,a,a,\dots,a)$ for any $a\in\K$ forces $e=0$. This choice of $\ab$ has the effect of shifting the eigenvalues by which $y_1$ and $y_2$ act by $a$.

 \subsection{The shape of calibrated representations when $e$ does not act by $0$} 
The next step in our classification of calibrated $\sv_2$-representations consists in showing that all calibrated representations on which $e$ does not act by $0$ arise via the construction just given in the preceding subsection. We will often abuse notation and give the matrices representing the generators the same names as the generators of the abstract algebra themselves.
 \begin{theorem}\label{main1}
 Suppose $V$ is a finite-dimensional, indecomposable calibrated representation of $\sv_2$ on which $e$ does not act by $0$. Then $V=V_{k,\ell}(S;\ab)$ for some $k,\ell\in\N$, some $S\in M_{k\times\ell}(\K)$, and some $\ab\in\K^{k+\ell}$.
 \end{theorem}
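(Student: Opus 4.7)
The plan is to analyze the interplay between the $s$-eigenspace decomposition $V=V_+\oplus V_-$ (with $V_\pm=\ker(s\mp 1)$) and the simultaneous $y$-eigenspace decomposition $V=\bigoplus V_{\gamma,\delta}$, showing that under the given hypotheses the $y$-eigenvalues are forced into the two antidiagonal strips $\delta=\gamma\pm 1$ and that $V_-=\bigoplus_\gamma V_{\gamma+1,\gamma}$. A basis adapted to $V_-$ together with a chosen complement $U$ will then realize $V$ as $V_{k,\ell}(S;\ab)$.

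From $es=e$ and $se=-e$ one reads off $e(V_-)=0$ and $e(V)\subseteq V_-$. For a simultaneous $y$-eigenvector $u\in V_-\cap V_{\gamma,\delta}$, applying $sy_1=y_2s-1-e$ (using $su=-u$ and $eu=0$) yields $\gamma=\delta+1$, so all $y$-eigenvectors in $V_-$ lie in the strip $\gamma=\delta+1$. Dually, for each partner pair $(V_{\gamma+1,\gamma},V_{\gamma,\gamma+1})$ the restriction of $s$ to their direct sum has block form $\bigl(\begin{smallmatrix}-\Id&T'\\T&\Id\end{smallmatrix}\bigr)$ with $TT'=T'T=0$ (from $s^2=1$); combining the relation $se=-e$ with the hypotheses $e\neq 0$ and indecomposability of $V$, one shows the shear $T$ must vanish. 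Hence $V_{\gamma+1,\gamma}\subseteq V_-$ for each $\gamma$, and together with the first half we obtain $V_-=\bigoplus_\gamma V_{\gamma+1,\gamma}$, which is manifestly $y$-stable.

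$V_-$ is therefore an $\sv_2$-submodule, and on the quotient $V/V_-$ the element $s$ acts as $\Id$, $e$ acts as $0$, and $sy_1=y_2s-1-e$ collapses to $y_2=y_1+1$; thus the $y$-eigenspaces of $V/V_-$ are concentrated at $(\gamma,\gamma+1)$. For $(\gamma,\delta)$ with $\delta\notin\{\gamma-1,\gamma+1\}$ we have $V_{\gamma,\delta}\cap V_-=0$, so $V_{\gamma,\delta}$ injects into $(V/V_-)_{\gamma,\delta}=0$, forcing $V_{\gamma,\delta}=0$. Setting $U:=\bigoplus_\gamma V_{\gamma,\gamma+1}$ gives a $y$-stable vector-space decomposition $V=V_-\oplus U$.

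Choose simultaneous $y$-eigenbases $u_1,\dots,u_k$ of $V_-$, with eigenvalues $(a_i,a_i-1)$, and $v_1,\dots,v_\ell$ of $U$, with eigenvalues $(b_j-1,b_j)$, and set $\ab:=(a_1,\dots,a_k,b_1,\dots,b_\ell)$. In this basis $y_1$ and $y_2$ have the matrices $Y_1+\diag\ab$ and $Y_2+\diag\ab$. Since $s$ acts as $\Id$ on $V/V_-$, the difference $sv_j-v_j$ lies in $V_-$, hence $sv_j=v_j+\sum_i s_{ij}u_i$ for scalars $s_{ij}\in\K$, identifying $s$ with $\widetilde{S}$ for $S=(s_{ij})$. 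Rewriting $sy_2=y_1s+1-e$ as $e=y_1s-sy_2+\Id$ and computing yields $ev_j=\sum_i(a_i-b_j)s_{ij}u_i$, which matches the prescription in $V_{k,\ell}(S;\ab)$. Hence $V\cong V_{k,\ell}(S;\ab)$. The main technical obstacle is the vanishing of the shear $T$ in each block---this is precisely where the hypotheses $e\neq 0$ and indecomposability enter---after which the remainder of the argument is a routine coordinate bookkeeping.
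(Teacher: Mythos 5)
Your overall strategy is reasonable and in one place arguably cleaner than the paper's (the quotient argument $V_{\gamma,\delta}\hookrightarrow (V/V_-)_{\gamma,\delta}=0$ for killing eigenvalues off the strips $\delta=\gamma\pm1$ is a nice alternative to the paper's block-decomposition argument for discarding the other $y_1-y_2$ eigenvalues). But the proposal has a genuine gap at exactly the step you yourself flag as ``the main technical obstacle'': you never prove that the shear $T$ vanishes, and this is the heart of the theorem, not a routine verification. The paper's proof devotes most of its length to it: writing $s=\begin{pmatrix}-\Id_k&S\\T&\Id_\ell\end{pmatrix}$ on the strip, one extracts from $s^2=1$, $es=e$, $se=-e$ and $e=y_1s-sy_2+1$ the identities $TS=ST=ET=TE=0$, $e_{ij}=(a_i-b_j)s_{ij}$ and $t_{ji}(b_j-a_i)=0$, and then, assuming $T\neq0$, uses these to show that $y_1$, $y_2$, $s$, $e$ all preserve both the subrepresentation generated by a complement of $\mathrm{Ker}(T)$ and the one generated by $\mathrm{Ker}(T)$ together with a complement of $\mathrm{Im}(T)$; the hypothesis $e\neq 0$ enters to force $S\neq0$ so that both summands are nonzero, contradicting indecomposability. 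Writing ``combining $se=-e$ with $e\neq0$ and indecomposability, one shows $T$ must vanish'' simply defers this entire argument, so the proof as submitted is incomplete.

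There is also a concrete error, though a repairable one: $s$ does \emph{not} preserve the ``partner pair'' $V_{\gamma+1,\gamma}\oplus V_{\gamma,\gamma+1}$ for a fixed $\gamma$. The relation $s(y_1-y_2)+(y_1-y_2)s=-2$ only forces $s$ to preserve the sum of the full $+1$ and $-1$ eigenspaces of $y_1-y_2$; in the final answer $s_{ij}$ can perfectly well be nonzero with $a_i\neq b_j$ (see the paper's explicit examples), so $s(V_{\gamma,\gamma+1})$ genuinely spreads across several $V_{\gamma'+1,\gamma'}$. The block form of $s$, and the vanishing of $T$, must be set up on the whole strips $\bigoplus_\gamma V_{\gamma+1,\gamma}$ and $\bigoplus_\gamma V_{\gamma,\gamma+1}$ rather than pair by pair. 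Relatedly, your deduction $V_-=\bigoplus_\gamma V_{\gamma+1,\gamma}$ needs more care: the ``first half'' of your second paragraph only constrains those elements of $V_-$ that happen to be simultaneous $y$-eigenvectors, and $V_-$ is not a priori spanned by such vectors; it is cleaner to work directly with $W:=\bigoplus_\gamma V_{\gamma+1,\gamma}$ and prove it is a submodule once $T=0$ is established.
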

 \begin{proof}
By assumption there is a basis for $V$ such that $y_1$ and $y_2$ act by diagonal matrices. We can choose this basis so that the matrix for $y_1-y_2$ has the form $y_1-y_2=\diag(1,\dots,1,-1,\dots,-1,d_1,\dots,d_1,-d_1,\dots,-d_1,\dots, d_s,\dots,d_s,-d_s,\dots,-d_s)$, where $1$ occurs $k$ times and $-1$ occurs $\ell$ times, and say $d_i$ occurs $k_i$ times, $-d_i$ occurs $\ell_i$ times. Let $e=(e_{ij})$ be the matrix of $e$. Using the relations $e=(y_1-y_2)e$ and $-e=e(y_1-y_2)$ and writing out the equations for the matrix entries $e_{ij}$, we see that $e$ has all $0$ entries except for in the $k\times \ell$ block $ 1\leq i\leq k,\; k+1\leq j\leq k+\ell$.
Next, we look at the shape of the matrix of $s$. Adding the two equations mixing $s$ and the $y_i$'s, we have the equation $s(y_1-y_2)+(y_1-y_2)s=-2$. Solving this equation for the matrix entries of $s$, we see that $s$ is a block matrix with blocks
$\begin{pmatrix} -\Id_k & S\\ T & \Id_\ell 
\end{pmatrix} $
in the upper left corner; then arranged down the diagonal, further square blocks of shape 
$\begin{pmatrix}
-\frac{1}{d_i}\Id_{k_i}& S_i\\T_i & \frac{1}{d_i}\Id_{\ell_i}
\end{pmatrix}$
and $0$'s everywhere else, 
giving us

%{\color{magenta} COMPILE!}
%%%%%%%%can't compile on ipad

% \begin{comment}
\[ 
\small
s = 
\left( 
\begin{array}{cc;{1pt/1pt}cc;{1pt/1pt}cc;{1pt/1pt}cc}
-\Id_k & S & &0& & 0 & &0 \\ 
T & \Id_{\ell} & & & & & & \\ 
\hdashline[1pt/1pt]
& &-\frac{1}{d_1}\Id_{k_1} & S_1& & & & \\ 
0& & T_1& \frac{1}{d_1} \Id_{\ell_1}& & 0& &0 \\ 
\hdashline[1pt/1pt]
& & & & \ddots& & & \\ 
0& & &0 & & \ddots& &0 \\ 
\hdashline[1pt/1pt]
& & & & & &-\frac{1}{d_s}\Id_{k_s} &S_s \\ 
0 & & &0 & & 0& T_s& \frac{1}{d_s} \Id_{\ell_s}\\ 
\end{array}
\right). 
\]  
%\end{comment}

%%%%%%%%%%we return to our scheduled programming

Now, considering the shapes of $y_1$, $y_2$, $e$, and $s$, we observe that they are all block matrices with 
\begin{itemize}
\item a $(k+\ell) \times (k+\ell)$ block in the upper left corner;
\item a $(\sum_i( k_i+\ell_i))\times (\sum_i (k_i+\ell_i))$ block in the lower right corner;
\item blocks made of $0$'s in the upper right and lower left corner.
\end{itemize}
It follows that the representation $V$ is the direct sum $V=V_1\oplus V_2$ where $V_1$ is $(k+\ell)$-dimensional and the action of $y_1$, $y_2$, $e$, and $s$ on $V_1$ is given by the matrix block of size $(k+\ell) \times (k+\ell)$ in the upper left corner, and where $e$  acts by $0$ on $V_2$. Since $V$ is an indecomposable representation on which $e$ does not act by $0$, $V=V_1$.

Write $y_1=\diag(a_1,\dots,a_k,b_1-1,\dots,b_\ell-1)$ and $y_2=\diag(a_1-1,\dots,a_k-1,b_1,\dots,b_\ell)$. Using the relations $es=e$, $se=-e$, and $s^2=1$ gives the following information about $e$ and $s$:
$$s=\begin{pmatrix}
-\Id_k & S\\ T & \Id_\ell
\end{pmatrix},
\qquad
e=\begin{pmatrix} 0 & E \\ 0 & 0
\end{pmatrix},
$$
where 
$S=(s_{ij})$, $T=(t_{ji})$, $E=(e_{ij})$, $1\leq i\leq k$ and $1\leq j\leq \ell$, satisfy the equations \begin{equation}\label{T}TS=ST=ET=TE=0,\qquad e_{ij}=(a_i-b_j)s_{ij}, \qquad t_{ji}(b_j-a_i)=0.
\end{equation}

Suppose $T\neq 0$. We will show that $V$ is decomposable. Since we are assuming $e\neq 0$, we also have $S\neq 0$. %If $\dim M=2$, then $t_{ji}=t_{11}\neq 0$ implies $s_{11}=0$ implies $e=0$ contradicting our assumption that $e$ does not act by $0$. So we may assume that $\dim M>2$. 
Let $\mathbf{v}_i$ be an eigenvector for $y_1$ with eigenvalue $a_i$ and let $\mathbf{w}_j$ be an eigenvector for $y_1$ with eigenvalue $b_j-1$. By assumption, $0\subsetneq\mathrm{Im}(T)\subseteq \mathrm{Ker}(S)\subsetneq  \K^\ell\cong \mathrm{Span}_\K(\mathbf{w}_1,\dots,\mathbf{w}_\ell)$ and $0 \subsetneq\mathrm{Im}(S)\subseteq \mathrm{Ker}(T)\subsetneq \K^k\cong\mathrm{Span}_\K(\mathbf{v}_1,\dots,\mathbf{v}_k)$. %We abuse notation in the proof that follows: 
Under the isomorphism $\mathrm{Span}_\K(\mathbf{v}_1,\dots,\mathbf{v}_k)\cong\K^k$ we identify $\mathbf{v}_i=(0,\dots,0,1,0,\dots,0,0,\dots,0)$ with the vector $\overline{\mathbf{v}}_i:=(0,\dots,0,1,0,\dots,0)$ where we delete the last $\ell$ zeros from $\mathbf{v}_i$; similarly under the isomorphism $\mathrm{Span}_\K(\mathbf{w}_1,\dots,\mathbf{w}_\ell)\cong \K^\ell$ we drop the first $k$ zeros from the vector $\mathbf{w}_\ell$ and call the resulting vector $\overline{\mathbf{w}}_\ell$. These isomorphisms are obviously equivariant for the $y_1$ and $y_2$ actions, where $y_1$ acts by $\diag(a_1,\dots,a_k)$ on $\K^k$ and by $\diag(b_1-1,\dots,b_\ell-1)$ on $\K^\ell$, and similarly with $y_2$.. Since $T(\overline{\mathbf{v}}_i)$ is just the $i$'th column of $T$, by Equation \eqref{T} it follows that for any $1\leq j\leq \ell$ such that $t_{ji}\neq 0$, $b_j=a_i$ and thus $y_1\mathbf{w}_j=(a_i-1)\mathbf{w}_j$. So $y_1T(\overline{\mathbf{v}}_i)=y_1\sum_{j=1}^\ell t_{ji}\overline{\mathbf{w}}_j=\sum_{j=1}^\ell t_{ji}(b_j-1)\overline{\mathbf{w}}_j=(a_i-1)\sum_{j=1}^\ell t_{ji}\overline{\mathbf{w}}_j=(a_i-1)T(\overline{\mathbf{v}}_i)$.
This shows that $\mathrm{Im}(T)$ consists of eigenvectors for $y_1$. If we take $\sum_{j=1}^\ell f_j\overline{\mathbf{w}}_j\in\mathrm{Im}(T)^\perp$, a vector space complement to $\mathrm{Im}(T)$ in $\K^\ell$, then $T(\overline{\mathbf{v}}_i)\cdot (y_1\sum_{j=1}^\ell f_j\overline{\mathbf{w}}_j)=\sum_{j=1}^\ell t_{ji}(b_j-1)f_j=(a_i-1)T(\overline{\mathbf{v}}_i)\cdot \left(\sum_{j=1}^\ell f_j\overline{\mathbf{w}}_j\right)=0.$
So $y_1$ preserves $\mathrm{Im}(T)^\perp$.

Next, we show that $y_1$ preserves $\mathrm{Ker}(T)$. %Next, we show that $\mathrm{Ker}(T)$ consists of eigenvectors for $y_1$.
 Again, Equation \eqref{T} shows that $a_m=b_j=a_i$ whenever $t_{jm}\neq 0$ is in the same row as $t_{ji}\neq 0$. Take $\overline{\mathbf{u}}\in\mathrm{Ker}(T)$ and write $\overline{\mathbf{u}}=\sum_{i=1}^k c_i\overline{\mathbf{v}}_i$ for some $c_i\in\K$. % $\sum_{i=1}^k t_{ji}c_i=0$. 
Fix a row $\mathbf{t}_j$ of $T$. Since $t_{ji}=0$ whenever $b_j\neq a_i$, we then have $0=b_j\left(\mathbf{t}_j\cdot\overline{\mathbf{u}}\right)=b_j(\sum_{i=1}^k t_{ji}c_i)=\sum_{i=1}^k t_{ji}b_jc_i=\sum_{i=1}^kt_{ji}a_ic_i=\mathbf{t}_j\cdot(y_1\overline{\mathbf{u}})$, showing that $y_1$ preserves $\mathrm{Ker}(T)$.  Then $y_1\mathrm{Im}(S)\subseteq \mathrm{Ker}(T)$ since $\mathrm{Im}(S)\subseteq\mathrm{Ker}(T)$ and $y_1\mathrm{Ker}(T)\subseteq \mathrm{Ker}(T)$. Let $\mathrm{Ker}(T)^\perp$ be a vector space complement to $\mathrm{Ker}(T)$ in $\K^k$. If $\sum_{i=1}^kd_i\overline{\mathbf{v}}_i=\overline{\mathbf{z}}\in\mathrm{Ker}(T)^\perp $ then $(y_1\overline{\mathbf{z}})\cdot\overline{\mathbf{u}}=\sum_{i=1}^ka_id_ic_i=\overline{\mathbf{z}}\cdot (y_1\overline{\mathbf{u}})=0$ so $y_1$ preserves $\mathrm{Ker}(T)^\perp$ as well. 

All the preceding arguments apply as well to $y_2$ as to $y_1$ since $y_1-y_2=\begin{pmatrix}\Id_k&0\\0&-\Id_\ell\end{pmatrix}$. Now take $V_1$ to be the $\sv_2$-subrepresentation of $V$ generated by a vector space complement to $\mathrm{Ker}(T)$ in $\mathrm{Span}_\K(\mathbf{v}_1,\dots,\mathbf{v}_k)$, and take $V_2$ to be the subrepresentation of $V$ generated by $\mathrm{Ker}(T)$ together with a vector space complement to $\mathrm{Im}(T)$ in $\mathrm{Span}_\K(\mathbf{w}_1,\dots,\mathbf{w}_\ell)$. By construction $s=\begin{pmatrix} -\Id_k & S\\ T & \Id_\ell\end{pmatrix}$ preserves $V_1$ and $V_2$. We checked above that $y_1$ and $y_2$ preserve $V_1$ and $V_2$. Since $e=y_1s-sy_2+1$, $e$ also preserves $V_1$ and $V_2$. Then $V_1\neq 0$, $V_2\neq 0$, $V_1+V_2=V$ and $V_1\cap V_2=0$, and therefore $V\cong V_1\oplus V_2$ is decomposable.
 \end{proof}
 
Theorem \ref{main1} in pictures says that if $V$ is indecomposable and $e$ does not act by $0$, then the matrices of $y_1$, $y_2$, $e$, and $s$ have the following shapes:

%\begin{align*}
$$
\small 
y_1=\left( 
\begin{array}{ccc;{1pt/1pt}cccc}
a_1&\dots&0&0&0&\dots&0\\
\vdots&\ddots&\vdots&\vdots&\vdots&\ddots&\vdots\\
0&\dots&a_k&0&0&\dots&0\\
\hdashline[1pt/1pt]
0&\dots&0&b_1-1&0&\dots&0\\
0&\dots&0&0&b_2-1&\ddots&\vdots \\
\vdots&\ddots&\vdots&\vdots&\ddots&\ddots&0 \\
0&\dots&0&0&\dots&0&b_\ell-1\\
\end{array}
\right), 
\quad e=
\left( 
\begin{array}{ccc;{1pt/1pt}cccc} 
0&\dots&0&e_{11}&e_{12}&\dots&e_{1\ell}\\
0&\dots&0&e_{21}&e_{22}&\dots&e_{2\ell}\\
\vdots&\ddots&\vdots&\vdots&\vdots&\ddots&\vdots\\
0&\dots&0&e_{k1}&e_{k2}&\dots&e_{k\ell}\\
\hdashline[1pt/1pt]
0&\dots&0&0&0&\dots&0\\
\vdots&\ddots&\vdots&\vdots&\vdots&\ddots&\vdots\\
0&\dots&0&0&0&\dots&0\\
\end{array}
\right),
$$
$$
\small 
y_2=
\left( 
\begin{array}{ccc;{1pt/1pt}cccc}
a_1-1&\dots&0&0&0&\dots&0\\
\vdots&\ddots&\vdots&\vdots&\vdots&\ddots&\vdots\\
0&\dots&a_k-1&0&0&\dots&0\\
\hdashline[1pt/1pt]
0&\dots&0&b_1&0&\dots&0\\
0&\dots&0&0&b_2&\ddots&\vdots \\
\vdots&\ddots&\vdots&\vdots&\ddots &\ddots&0\\
0&\dots&0&0&\dots & 0 &b_\ell\\
\end{array}
\right),
\quad
s=
\left( 
\begin{array}{cccc;{1pt/1pt}cccc} 
-1&0& \dots&0&s_{11}&s_{12}&\dots&s_{1\ell}\\
0&-1& \ddots&\vdots&s_{21}&s_{22}&\dots&s_{2\ell}\\
\vdots&\ddots&\ddots&0& \vdots& \vdots&\ddots&\vdots\\
0&\dots& 0 &-1&s_{k1}&s_{k2}&\dots&s_{k\ell}\\
\hdashline[1pt/1pt]
0&0& \dots&0&1&0&\dots&0\\
0&0 &\dots&0&0&1&\ddots&\vdots\\
\vdots&\vdots& \ddots&\vdots&\vdots&\ddots&\ddots&0\\
0&0& \dots&0&0&\dots&0&1\\
\end{array}
\right).
$$

The following corollary is immediate from the matrix descriptions of $y_1$, $y_2$, and $s$ given by Theorem \ref{main1}:
\begin{corollary}\label{ext}
Suppose $V$ is a $(k+\ell)$-dimensional indecomposable calibrated representation of $\sv_2$ on which $e$ does not act by $0$.  Then all simple composition factors of $V$ are $1$-dimensional, and $V$ is the following extension of semisimple $\sv_2$-modules:
$$ 0\longrightarrow\bigoplus_{i=1}^k V_{a_i}^-\longrightarrow V \longrightarrow \bigoplus_{j=1}^\ell V_{b_j-1}^+\rightarrow 0,$$
where $y_1$ acts on $V$ by $(a_1,\dots,a_k,b_1-1,\dots,b_\ell-1)\in\K^{k+\ell}$. In particular, every simple calibrated representation of $\sv_2$ is obtained from a simple calibrated representation of $\Hdeg_2$ by having $e$ act by $0$.
\end{corollary}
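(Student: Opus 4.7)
The plan is to read the exact sequence directly off the explicit matrix shapes produced by Theorem \ref{main1}, so no real computation is needed. First I would invoke Theorem \ref{main1} to fix a basis $\mathbf{v}_1,\dots,\mathbf{v}_k,\mathbf{w}_1,\dots,\mathbf{w}_\ell$ of simultaneous $y_1,y_2$-eigenvectors in which the four generators take the displayed block forms. Next I would identify $V' := \mathrm{Span}_\K(\mathbf{v}_1,\dots,\mathbf{v}_k)$ as an $\sv_2$-submodule by checking each generator separately: from the diagonal forms, $y_1$ acts on $V'$ by $\mathrm{diag}(a_1,\dots,a_k)$ and $y_2$ by $\mathrm{diag}(a_1-1,\dots,a_k-1)$; the top-left block of $s$ is $-\Id_k$, so $s|_{V'} = -\Id$; and the first $k$ columns of the matrix of $e$ are zero, so $e$ kills $V'$. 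Hence $V' \cong \bigoplus_{i=1}^{k} V_{a_i}^-$ as a direct sum of one-dimensional $\sv_2$-modules.

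Second, I would read off the quotient $V/V'$ from the bottom-right $\ell\times\ell$ blocks: $y_1$ acts with eigenvalues $b_j-1$, $y_2$ with eigenvalues $b_j$, $s$ by $+\Id_\ell$, and $e$ by $0$ (its bottom rows are zero). This gives $V/V' \cong \bigoplus_{j=1}^{\ell} V_{b_j-1}^{+}$, yielding the short exact sequence in the statement and showing that all composition factors of $V$ are one-dimensional.

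For the final assertion, I would argue by contradiction. If $V$ is a simple calibrated $\sv_2$-representation on which $e$ acts nontrivially, Theorem \ref{main1} forces $V = V_{k,\ell}(S;\ab)$; since $e$ is the $(k+\ell)\times(k+\ell)$ block matrix with only a $k\times\ell$ off-diagonal block $E$, the condition $e\neq 0$ forces both $k\geq 1$ and $\ell \geq 1$. But then $V'$ is a proper nonzero submodule, contradicting simplicity. So every simple calibrated $\sv_2$-representation has $e$ acting by $0$ and, by Lemma \ref{degHecke}, factors through $\Hdeg_2$. There is no real obstacle in this argument; the only point that needs attention is to remember that in the construction the block $E$ of $e$ sits in the upper-right, so the span of $\mathbf{v}_1,\dots,\mathbf{v}_k$ (and not of $\mathbf{w}_1,\dots,\mathbf{w}_\ell$) is the submodule, while the $\mathbf{w}_j$'s yield the quotient.
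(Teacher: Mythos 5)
Your proposal is correct and is exactly the argument the paper intends: the corollary is stated as immediate from the block shapes of $y_1$, $y_2$, $s$, and $e$ in Theorem \ref{main1}, and you simply spell out why $\mathrm{Span}_\K(\mathbf{v}_1,\dots,\mathbf{v}_k)$ is the submodule $\bigoplus_i V_{a_i}^-$ and the $\mathbf{w}_j$'s give the quotient $\bigoplus_j V_{b_j-1}^+$. Your handling of the final assertion (simple $\Rightarrow$ indecomposable, then $e\neq 0$ would force a proper nonzero submodule) is also the intended reading, so there is nothing to add.
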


\subsection{Rhizomatic matrices}\label{rhizomatic} We now introduce a set of matrices that we will use for determining when a calibrated representation with regular eigenvalues is indecomposable. Let $S\in M_{k\times\ell}(\K)$ be a $k\times \ell$ matrix. 
 Define an equivalence relation on the nonzero entries of $S$ by $s_{ij}\sim  s_{mn}$ if $i=m$ or $j=n$. 
\begin{definition} 
Define the \textit{rhizomatic matrices} 
to be the set of matrices $S\in M_{k\times \ell}(\K)$ such that (i) $S$ has a single equivalence class of nonzero entries under the equivalence relation $\sim$, and (ii) $S$ has a nonzero entry in every row and column. 
\end{definition}
 \begin{example} Any matrix all of whose entries are nonzero is rhizomatic. If $\ell\geq k$ then any $k\times \ell$ matrix $S$ where $s_{ij}\neq0$ whenever $j\geq i$ is rhizomatic. Any $n\times n$ diagonal matrix, and more generally any monomial matrix, is \textit{not} rhizomatic.
 \end{example}
 \begin{example} Denote a $0$ entry by $\cdot$ and a nonzero entry by $\bullet$.
Matrix $S_1$ contains two equivalence classes of nonzero entries and is \textit{not} rhizomatic: one equivalence class has black entries $\bullet$, the other has blue entries ${\color{blue}\bullet}$. Matrix $S_2$ contains a single equivalence class but is \textit{not} rhizomatic because it has some columns and rows that are all $0$. Matrix $S_3$ contains a single equivalence class and is rhizomatic: 
$$S_1= \begin{pmatrix}
\cdot & \cdot & \cdot &{\color{blue}\bullet}  & {\color{blue}\bullet}  & \cdot & {\color{blue}\bullet}  & {\color{blue}\bullet}  & \cdot & \cdot \\
\bullet & \cdot &  \cdot & \cdot & \cdot & \cdot & \cdot & \cdot &  \cdot & \bullet \\
 \cdot & {\color{blue}\bullet}  & \cdot  & {\color{blue}\bullet}&  {\color{blue}\bullet}& {\color{blue}\bullet}& \cdot & \cdot &  \cdot & \cdot \\
 \cdot & \cdot & \cdot & \cdot & \cdot & \cdot & \cdot & \cdot & \bullet & \cdot \\
 \cdot & \cdot & \bullet & \cdot & \cdot & \cdot & \cdot & \cdot &  \cdot & \bullet \\
\cdot & {\color{blue}\bullet} &  \cdot & \cdot  & {\color{blue}\bullet} & {\color{blue}\bullet}  & {\color{blue}\bullet}  & \cdot &  \cdot & \cdot \\
 \cdot & \cdot & \bullet & \cdot & \cdot & \cdot & \cdot & \cdot & \bullet & \cdot \\
\end{pmatrix}, 
\quad\quad
S_2=\begin{pmatrix}
\cdot & \cdot & \cdot & \cdot & \cdot & \bullet & \cdot & \cdot & \cdot & \bullet\\
\cdot & \cdot & \cdot & \bullet & \bullet & \bullet & \cdot & \cdot & \cdot & \cdot\\
\cdot & \cdot & \cdot & \cdot & \cdot & \cdot & \cdot & \cdot & \cdot & \cdot\\
\cdot & \cdot & \cdot & \bullet& \cdot & \cdot & \cdot & \bullet & \bullet & \cdot\\
\bullet & \cdot & \cdot & \bullet & \cdot & \cdot & \cdot & \cdot & \cdot & \cdot\\
\cdot & \cdot & \cdot & \cdot & \cdot & \cdot & \cdot & \cdot & \cdot & \cdot\\
\cdot & \cdot & \cdot & \cdot & \cdot & \cdot & \cdot & \cdot & \cdot & \bullet\\
\end{pmatrix},$$ 
$$ S_3=\begin{pmatrix}
\bullet & \cdot & \bullet & \cdot & \cdot & \cdot &  \bullet& \cdot & \bullet & \cdot \\
\bullet & \cdot &  \cdot & \cdot & \cdot & \cdot & \cdot & \cdot &  \cdot & \bullet \\
 \cdot & \bullet & \cdot  & \cdot & \cdot & \bullet & \cdot & \bullet &  \cdot & \cdot \\
 \cdot & \cdot & \cdot & \bullet& \bullet & \cdot & \cdot & \cdot & \bullet & \cdot \\
 \cdot & \cdot & \bullet & \cdot & \bullet& \bullet & \cdot & \cdot &  \cdot & \bullet \\
\cdot & \bullet &  \cdot & \cdot & \cdot & \cdot & \cdot & \cdot &  \cdot & \cdot \\
 \bullet & \cdot & \bullet & \cdot & \cdot & \cdot & \bullet& \cdot & \bullet & \cdot \\
\end{pmatrix}. 
$$
 \end{example}

\subsection{Indecomposable calibrated representations with regular eigenvalues}

\begin{definition}Suppose $V$ is a  calibrated representation of $\sv_2$ such that $y_1-y_2$ acts on $V$ by $\begin{pmatrix}\Id_k & 0\\ 0 & -\Id_\ell
\end{pmatrix}$ in an eigenbasis for $y_1$ and $y_2$. Set $\underline{a}=(a_1,\dots,a_k)$ and $\underline{b}=(b_1,\dots,b_\ell)$ and $$\K^{k+\ell,\mathrm{reg}}=\{\ab\in\K^{k+\ell}\mid a_i\neq a_j\hbox{ and }b_m\neq b_n\hbox{ for all }1\leq i<j\leq k,\;1\leq m< n\leq \ell\}.$$ 
If $y_1$ acts on $V$ by $\diag\ab$ for some $\ab\in\K^{k+\ell,\mathrm{reg}}$ then we say that the representation $V$ has \textit{regular eigenvalues}. 
 \end{definition}
 
\begin{theorem}\label{main2} 
Suppose $V=V_{k,\ell}(S;\ab)$ is a finite-dimensional calibrated representation of $\sv_2$ on which $e$ does not act by $0$. \begin{enumerate}
\item\label{main2-1} Suppose $\ab\in\K^{k+\ell,\mathrm{reg}}$. Then $V$ is indecomposable if and only if 
$S$ is a rhizomatic matrix. 
\item\label{main2-2} Suppose $(\underline{a}, b)\in\K^{k+1}$. Then $V$ is indecomposable if and only if 
$(\underline{a}, b)\in\K^{k+1,\mathrm{reg}}$ and 
all entries of $S$ are nonzero.
\item\label{main2-3} Suppose $(a,\underline{b})\in\K^{1+\ell}$. Then $V$ is indecomposable if and only if 
$(a,\underline{b})\in\K^{1+\ell,\mathrm{reg}}$ and  
all entries of $S$ are nonzero.
\end{enumerate}
\end{theorem}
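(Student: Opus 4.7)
The plan is to reduce each case of the theorem to a combinatorial condition on the nonzero entries of $S$. The main leverage comes from the joint eigenspace decomposition of $V = V_{k,\ell}(S;\ab)$ under $(y_1,y_2)$, whose eigenvalue pairs on the distinguished basis are $(a_i, a_i-1)$ on $\mathbf{v}_i$ and $(b_j-1, b_j)$ on $\mathbf{w}_j$.

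For part (\ref{main2-1}), I would first verify that regularity of $\ab$ forces each joint $(y_1,y_2)$-eigenspace to be one-dimensional: the pairs $(a_i, a_i-1)$ are pairwise distinct by regularity of $\underline{a}$, similarly the pairs $(b_j-1, b_j)$, and a cross-coincidence $(a_i, a_i-1) = (b_j-1, b_j)$ would force $0 = -1$. Consequently any direct-sum decomposition $V = V_1 \oplus V_2$ of $\sv_2$-modules assigns each basis vector $\mathbf{v}_i$ or $\mathbf{w}_j$ entirely to a single summand, so decompositions are parametrized by bipartitions $\{1,\ldots,k\} = I_1 \sqcup I_2$ and $\{1,\ldots,\ell\} = J_1 \sqcup J_2$. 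Reading off the matrices of $s$ and $e$ produced by Theorem \ref{main1}, and using the identity $e_{ij} = (a_i - b_j) s_{ij}$ which makes $s$-invariance automatically imply $e$-invariance, the span of $\{\mathbf{v}_i : i \in I_\alpha\} \cup \{\mathbf{w}_j : j \in J_\alpha\}$ is an $\sv_2$-subrepresentation precisely when $s_{ij} \neq 0$ forces $i \in I_\alpha \Leftrightarrow j \in J_\alpha$. Thus $V$ is indecomposable if and only if the bipartite graph $G_S$ on $\{1,\ldots,k\} \sqcup \{1,\ldots,\ell\}$ with edge $(i,j)$ whenever $s_{ij} \neq 0$ is connected. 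The final step is to identify connectedness of $G_S$ with $S$ being rhizomatic: condition (i) in the definition says the subgraph of $G_S$ on edge-incident vertices is connected (two nonzero entries are related by $\sim$ exactly when the corresponding edges share an endpoint in $G_S$), while condition (ii) rules out isolated vertices.

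For parts (\ref{main2-2}) and (\ref{main2-3}), the $(\Leftarrow)$ directions follow immediately from part (\ref{main2-1}), since any $k \times 1$ or $1 \times \ell$ matrix with no zero entries is rhizomatic. For the $(\Rightarrow)$ directions I would prove the contrapositive by exhibiting explicit decompositions. In part (\ref{main2-2}): if some $s_{i_0,1} = 0$, then $V = \K \mathbf{v}_{i_0} \oplus \mathrm{Span}(\mathbf{v}_i : i \neq i_0,\, \mathbf{w}_1)$; if instead all $s_{i1} \neq 0$ but $a_p = a_q$ for some $p \neq q$, then $V = \K \mathbf{v}_p \oplus V'$, where $V' = \mathrm{Span}(s_{p1}\mathbf{v}_p + s_{q1}\mathbf{v}_q,\; \mathbf{v}_i : i \neq p,q,\; \mathbf{w}_1)$. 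The essential point making $V'$ into a subrepresentation is that both $s\mathbf{w}_1$ and $e\mathbf{w}_1$ have $\mathrm{Span}(\mathbf{v}_p, \mathbf{v}_q)$-component proportional to the combined vector $s_{p1}\mathbf{v}_p + s_{q1}\mathbf{v}_q$, for $e\mathbf{w}_1$ this using the coincidence $a_p = a_q$. Part (\ref{main2-3}) is handled analogously but more cleanly: if some $s_{1,j_0} = 0$, then $V = \K \mathbf{w}_{j_0} \oplus \mathrm{Span}(\mathbf{v}_1,\, \mathbf{w}_j : j \neq j_0)$ (noting $s\mathbf{w}_{j_0} = \mathbf{w}_{j_0}$ when $s_{1,j_0} = 0$); and if instead $b_m = b_n$ with all $s_{1j} \neq 0$, then $V = \K \mathbf{u} \oplus \mathrm{Span}(\mathbf{v}_1,\, \mathbf{w}_j : j \neq n)$, where $\mathbf{u} = s_{1n}\mathbf{w}_m - s_{1m}\mathbf{w}_n$ satisfies $s\mathbf{u} = +\mathbf{u}$ and $e\mathbf{u} = 0$.

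I expect the main technical obstacle to be the $(\Rightarrow)$ direction of part (\ref{main2-2}) when regularity fails. Unlike part (\ref{main2-3}), where simply omitting a single basis vector $\mathbf{w}_n$ from the complement produces an $\sv_2$-invariant subspace, the naive analogue in part (\ref{main2-2}) of omitting $\mathbf{v}_q$ does not yield an $s$-invariant subspace, because $s\mathbf{w}_1$ has nonzero $\mathbf{v}_q$-component whenever $s_{q1} \neq 0$. The key nontrivial observation is that the correct replacement for $\mathbf{v}_q$ in the complement is the specific combined direction $s_{p1}\mathbf{v}_p + s_{q1}\mathbf{v}_q$, on which both $s\mathbf{w}_1$ and (crucially) $e\mathbf{w}_1$ land when $a_p = a_q$.
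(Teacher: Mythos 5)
Your proposal is correct, but it takes a genuinely different route from the paper's. For part (1) the paper works with the endomorphism ring: regularity forces any matrix commuting with $y_1,y_2$ to be diagonal, commutation with $s$ then yields the equations $s_{ij}(z_i-w_j)=0$, and one reads off $\End(V)\cong\K^{\,n(S)+Z_r+Z_c}$ (where $n(S)$ is the number of $\sim$-classes of nonzero entries and $Z_r,Z_c$ count zero rows and columns), so indecomposability follows from the local-ring criterion exactly when $S$ is rhizomatic. You instead classify the direct-sum decompositions themselves: regularity makes the joint $(y_1,y_2)$-eigenspaces one-dimensional, so every summand is spanned by standard basis vectors, and $s$-invariance (with $e$-invariance automatic) translates into connectivity of the bipartite graph $G_S$, which is a clean reformulation of the rhizomatic condition. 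The two arguments are essentially dual to one another (idempotents of $\End(V)$ versus splittings of $V$); yours is more elementary and exhibits the decompositions explicitly, while the paper's computation yields the extra information that the number of indecomposable summands is $n(S)+Z_r+Z_c$. The same contrast persists in parts (2) and (3): in the non-regular case $a_p=a_q$ the paper shows $\End(V)$ contains a copy of the non-local ring of lower-triangular $2\times 2$ matrices, whereas you produce the explicit splitting $V=\K\mathbf{v}_p\oplus\mathrm{Span}_\K\bigl(s_{p1}\mathbf{v}_p+s_{q1}\mathbf{v}_q,\ \mathbf{v}_i\ (i\neq p,q),\ \mathbf{w}_1\bigr)$, and your observation that $e\mathbf{w}_1$ lands on the combined vector precisely because $a_p=a_q$ is the representation-theoretic content behind the paper's linear algebra. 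Both treatments are complete; yours has the virtue of displaying the decompositions rather than only certifying their existence.
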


\begin{proof}

For part \eqref{main2-1}, suppose $\ab\in\K^{k+\ell,\mathrm{reg}}$. 
Recall that a representation $V$ is indecomposable if and only if $\End(V)$ is a local ring, which is equivalent to every element of $\End(V)$ being either nilpotent or invertible. We determine $\End(V)$ as follows. Let $X\in\End(V)$, so by definition $X=(x_{ij})$ is a $(k+\ell)\times (k+\ell)$ matrix that commutes with the matrices for $y_1$, $y_2$, and $s$. %, and $e$. 
(Since $e=y_1s+1-sy_2$, we don't have to check commutation relations with $e$.) %We know that the shapes of the matrices for $y_1,y_2$, and $s$ are as given by Theorem \ref{main1}.
 First, from $y_1-y_2=\diag(1,\dots,1,-1,\dots,-1)$ it follows that $X\in M_{k\times k}(\K)\times M_{\ell\times\ell}(\K)\subset M_{(k+\ell)\times (k+\ell)}(\K)$ where we embed $M_{k\times k}(\K)$ in the upper left corner and $M_{\ell\times \ell}(\K)$ in the lower right corner of $(k+\ell)\times (k+\ell)$ matrices. Second, since $a_i\neq a_j$ for all $1\leq i<j\leq k$, and $b_m\neq b_n$ for all $1\leq m<n\leq \ell$, computing the matrix entries of the equation $y_1X=Xy_1$ shows that $X$ is a diagonal matrix, and so $$\End(V)\subseteq \{\diag(z_1,z_2,\dots,z_k,w_1,w_2,\dots,w_\ell)\mid z_i,w_j\in\K\}\cong \K^{k+\ell}.$$ 
(Computing the commutator of $X$ with $y_2$ now gives no new information, since we already used $y_1$ and $y_1-y_2$).

Write $X=\diag(z_1,z_2,\dots,z_k,w_1,w_2,\dots,w_\ell)$. We now determine $\End(V)$ as a subalgebra of diagonal matrices using the remaining equation $Xs-sX=0$. Computing the commutator $Xs-sX$, all entries are automatically $0$ except in the upper right $k\times \ell$ corner block, where we obtain the following $k\times \ell$ entries:
$$\begin{pmatrix}s_{11}(z_1-w_1)& s_{12}(z_1-w_2)&\dots &s_{1\ell}(z_1-w_\ell)\\
s_{21}(z_2-w_1)&s_{22}(z_2-w_2)&\dots &s_{2\ell}(z_2-w_\ell)\\
\vdots &\vdots&\ddots&\vdots\\
s_{k1}(z_k-w_1)&s_{k2}(z_k-w_2)&\dots&s_{k\ell}(z_k-w_\ell)
\end{pmatrix}. 
$$
Since $Xs-sX=0$, each of these $k\ell$ entries is equal to $0$. Thus for a given pair $(i,j)$, either $s_{ij}=0$ or $z_i=w_j$. Taking the equivalence class of a nonzero entry $s_{ij}$ as in Section \ref{rhizomatic}, it follows that $z_r=w_s=z_i=w_j$ for all $s_{rs}\sim s_{ij}$, i.e. all the $z_r$'s and $w_s$'s are equal to each other such that $r$ is the row or $s$ is the column of some nonzero entry $s_{rs}\sim s_{ij}$. If $s_{ij}$ and $s_{rs}$ are in different equivalence classes, then there is no relation between $z_i$ and $z_r$ or between $w_j$ and $w_s$. And finally, if some row $r$ contains all $0$ entries then we get no relation on $z_r$; similarly, if some column $s$ contains all $0$ entries then we get no relation on $w_s$. Let $n(S)\geq 1$ be the number of equivalence classes of nonzero entries of $S$, let $Z_r$ be the number of rows of $S$ that contain only $0$'s, and let $Z_c$ be the number of columns that contain only $0$'s. We have that $\End(V)\cong \K^{n(S)+Z_r+Z_c}$, but $\K^{n(S)+Z_r+Z_c}$ is a local ring if and only if $n(S)+Z_r+Z_c=1$ if and only if $n(S)=1$ and $Z_r=Z_c=0$ if and only if $S$ is rhizomatic. 
This concludes the proof of part \eqref{main2-1}.

We turn now to part \eqref{main2-2}. One direction of the statement is simply a special case of part \eqref{main2-1} when $\ell=1$: if $S$ is a $k\times 1$ matrix then $S$ is rhizomatic if and only if all the entries of $S$ are nonzero, thus if all entries of $S$ are nonzero and $(\underline{a}, b)\in\K^{k+1,\mathrm{reg}}$ then part \eqref{main2-1} says that $V$ is indecomposable.  For the converse direction, suppose that $V$ is indecomposable. If some entry $s_{i1}$ of $S$ is $0$ 
 then we see that the actions of the generators of $\sv_2$ preserve the subspaces $\K \mathbf{v}_i$ and $\K \mathbf{v}_1\oplus\dots \K \mathbf{v}_{i-1}\oplus \K \mathbf{v}_{i+1}\dots \K \mathbf{v}_{k+1}$ (where $\mathbf{v}_i$ denotes the $i$'th basis vector $(0,\dots,1,\dots,0)$ of $\K^{k+1}$ with $1$ in the $i$'th place and $0$'s elsewhere); thus $V$ splits as a direct sum of these two subrepresentations contradicting the assumption that $V$ is indecomposable. So $s_{i1}\neq 0$ for all $i=1,\dots,k$. Suppose $(\underline{a}, b)\notin \K^{k+1,\mathrm{reg}}$, so $a_i=a_m$ for some $i\neq m$; without loss of generality we may assume $i=1$ and $m=2$. Then the centralizer of $y_1$ and $y_2$ contains any matrix of the form 
$$
\small 
X=
\left( 
\begin{array}{cc;{1pt/1pt}cccc}
x_{11}&x_{12}&0&0& \dots&0\\
x_{21}&x_{22}&0&0& \dots&0\\
\hdashline[1pt/1pt]
0&0&x_{33}&0& \dots &0\\
0&0&0&x_{44}& \ddots &\vdots \\
\vdots &\vdots & \vdots& \ddots &\ddots&0\\
0&0&0&\dots  & 0 & x_{k+\ell,k+\ell}
\end{array}
\right).
$$
Computing the matrix entries of the equation $Xs-sX=0$ we get the following $k$ entries which are all equal to $0$:
$$
\begin{pmatrix}
(x_{11}-x_{k+1,k+1})s_{11}+x_{12}s_{21}\\
x_{21}s_{11}+(x_{22}-x_{k+1,k+1})s_{21}\\
(x_{33}-x_{k+1,k+1})s_{31}\\
\vdots\\
(x_{k,k}-x_{k+1,k+1})s_{k1}
\end{pmatrix}.
$$
Since all $s_{i1}\neq 0$ for $i=1,\dots,k$, $x_{i,i}=x_{k+1,k+1}$ for all $i=3,\dots, k$, and from the first and second lines we get that $x_{k+1,k+1}$ can be solved in terms of $x_{21}, x_{22}, s_{11},$ and $s_{21}$, and then similarly we can solve for $x_{12}$ in terms of $x_{11},\;x_{21},\;x_{22},\;s_{11},$ and $s_{21}$ in the first equation. Then we have
$$\End(V)\cong
\begin{pmatrix}
\K &0\\
\K & \K
\end{pmatrix}, 
$$
which is not a local ring, contradicting the assumption that $V$ is indecomposable. Therefore all the eigenvalues $a_i$ are distinct, and part \eqref{main2-2} is proved.

Finally, part \eqref{main2-3} is proved in a totally symmetric way to part \eqref{main2-2}.
\end{proof}
\begin{remark}
In fact, if $\ab\in\K^{k+\ell,\mathrm{reg}}$ and $S$ is rhizomatic, then for some $(i,j)$ the entry $e_{ij}=(a_i-b_j)s_{ij}$ of $e$ is automatically nonzero. Indeed, by way of contradiction suppose that $e$ is the $0$ matrix, but $S$ is rhizomatic. Then for any $s_{ij}\neq 0$ there is some other $s_{ik}\neq 0$ in the same row or some other $s_{hj}\neq 0$ in the same column. In the first case, $(a_i-b_j)s_{ij}=0=(a_i-b_k)s_{ik}$ forces $b_j=a_i$ and $b_k=a_i$ and thus $b_j=b_k$ for some $j\neq k$, contradicting the assumption $\ab\in\K^{k+\ell,\mathrm{reg}}$. Similarly in the second case. Thus $\ab\in\K^{k+\ell,\mathrm{reg}}$ and $S$ rhizomatic implies that $e$ does not act by $0$ on $V_{k,\ell}(S;\ab)$.
\end{remark}

\begin{example}
Let $k=3$ and $\ell=2$ and take $\ab=(2i,-2i,1,-1,1)$. Then $\ab\in\mathbb{C}^{3+2,\mathrm{reg}}$. Take $S=\begin{pmatrix}0&1\\-\pi&5\\\frac{ i\pi}{2}&0\end{pmatrix}$, a rhizomatic matrix. Then $V_{3,2}(S;\ab)$ is an indecomposable calibrated $\sv_2$-representation by Theorem \ref{main2-1}, and $y_1,y_2,s,e$ act by:
\begin{align*}
&y_1=
\left( 
\begin{array}{ccc;{1pt/1pt}cc}
2i&0&0&0&0\\
0&-2i&0&0&0\\
0&0&1&0&0\\
\hdashline[1pt/1pt]
0&0&0&-2&0\\
0&0&0&0&0
\end{array}
\right),
\quad
y_2=
\left( 
\begin{array}{ccc;{1pt/1pt}cc}
2i-1&0&0&0&0\\
0&-2i-1&0&0&0\\
0&0&0&0&0\\
\hdashline[1pt/1pt]
0&0&0&-1&0\\
0&0&0&0&1
\end{array}
\right),\\
&s=
\left( 
\begin{array}{ccc;{1pt/1pt}cc}
-1&0&0&0&1\\
0&-1&0&-\pi&5\\
0&0&-1&\frac{i\pi}{2}&0\\
\hdashline[1pt/1pt]
0&0&0&1&0\\
0&0&0&0&1
\end{array}
\right),
\quad
e=
\left(
\begin{array}{ccc;{1pt/1pt}cc}
0&0&0&0&1-2i\\
0&0&0&\pi-1&5+10i\\
0&0&0&1&0\\
\hdashline[1pt/1pt]
0&0&0&0&0\\
0&0&0&0&0
\end{array}
\right).
\end{align*}
\end{example}

\subsection{Isomorphism classes of calibrated representations with regular eigenvalues} Many of the calibrated representations we constructed in the previous section may be isomorphic to each other. In this section, we determine when two indecomposable calibrated representations with regular eigenvalues are isomorphic.

 Let $(\underline{a},\underline{b})\in\K^{k+\ell,\mathrm{reg}}$, so $\underline{a}=(a_1,\dots,a_k)$ with $a_i\neq a_j$ for all $i\neq j$, and $\underline{b}=(b_1,\dots,b_\ell)$ with $b_m\neq b_n$ for all $m\neq n$. Let $N_k\subset\GL_k(\K)$ be the normalizer of diagonal $k\times k$ matrices, so $N_k$ is the group of $k\times k$ matrices that have exactly one nonzero entry in every row and column. Similarly, let $N_\ell$ be the group of $\ell\times\ell$ matrices with one nonzero entry in every row and column. 
\begin{theorem}\label{iso1} The group $N_k\times N_\ell$ acts naturally on the set of $(k+\ell)$-dimensional indecomposable calibrated representations with regular eigenvalues and on the space of pairs consisting of a $(k\times \ell)$ rhizomatic matrix and a vector $\ab\in \K^{k+\ell,\mathrm{reg}}$ which parametrize these representations: $$\left(N_k\times N_\ell\right) \curvearrowright \widetilde{\mathcal{V}}_{k,\ell}:=\{(S,\ab)\mid S\in M_{k\times\ell}(\K) \hbox{ is rhizomatic},\;\ab\in\K^{k+\ell,\mathrm{reg}}\}.$$
\end{theorem}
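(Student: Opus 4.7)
My plan is to exhibit both actions concretely and then observe that they are intertwined by the parametrizing map $(S, \ab) \mapsto V_{k,\ell}(S; \ab)$. The action on the representation side will be the obvious one---conjugation by a block-monomial change of basis---while the action on the parameter side will just record its effect on the standard matrix data.

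First I would define the action on $\widetilde{\mathcal{V}}_{k,\ell}$ by letting $(g, h) \in N_k \times N_\ell$ send $(S, \ab)$ to $(gSh^{-1}, \sigma\underline{a}, \tau\underline{b})$, where $\sigma \in S_k$ and $\tau \in S_\ell$ are the permutations underlying the monomial matrices $g, h$. To verify that this lands in $\widetilde{\mathcal{V}}_{k,\ell}$, I reduce to two points: the permuted tuples $\sigma\underline{a}, \tau\underline{b}$ remain regular, which is immediate; and $gSh^{-1}$ remains rhizomatic, since this matrix is obtained from $S$ by permuting and rescaling rows and columns, and these operations preserve both the ``nonzero entry in every row and column'' property and the equivalence classes defined by $\sim$. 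The group action axioms then follow from matrix multiplication and the compatibility of the decomposition of $N_k$ into permutation and diagonal parts with products.

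Next I would define the action on representations by conjugating the matrices $y_1, y_2, s, e$ by the block-diagonal element $P_{g,h} = \begin{pmatrix} g & 0 \\ 0 & h \end{pmatrix} \in \GL_{k+\ell}(\K)$. This is a change of basis on $V_{k,\ell}(S; \ab)$, so the conjugated matrices continue to satisfy the defining relations of $\sv_2$. The main computation to carry out is that the resulting matrices again fit the standard shape of our construction: the diagonal matrices $y_1, y_2$ conjugate to diagonal matrices whose entries are permuted by $(\sigma, \tau)$ (because $g, h$ are monomial), the $\pm \Id$ blocks in $s$ are invariant under conjugation by a block-monomial matrix, and the upper-right block $S$ of $s$ transforms to $g S h^{-1}$. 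This identifies the conjugated representation as $V_{k,\ell}(gSh^{-1}; \sigma\underline{a}, \tau\underline{b})$, showing that the parametrization intertwines the two actions. Preservation of indecomposability is automatic because it is an isomorphism invariant, and preservation of regular eigenvalues is visible from the shape of the new $y_1$.

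I do not expect any serious obstacle here: the whole proof is a bookkeeping exercise in how block-monomial conjugation acts on our four standard-shape matrices, together with the combinatorial observation that the rhizomatic condition is stable under row/column permutation and rescaling. The step most likely to require any care is checking that $\sim$ is preserved under $S \mapsto g S h^{-1}$, but this reduces to the fact that two entries at positions $(i,j)$ and $(m,n)$ share a row or column if and only if the entries at the relabeled positions $(\sigma(i), \tau(j))$ and $(\sigma(m), \tau(n))$ do, since $\sigma$ and $\tau$ are bijections.
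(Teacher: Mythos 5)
Your proposal is correct and follows essentially the same route as the paper: conjugation by the block-diagonal embedding of $N_k\times N_\ell$ on the matrices $y_1,y_2,s,e$, the induced action $(S,\ab)\mapsto (gSh^{-1},\sigma\underline{a},\tau\underline{b})$ on parameters, and the observation that permuting and rescaling rows and columns preserves both the single-equivalence-class condition and the nonzero-entry-in-every-row-and-column condition. The only point to be careful about, which the paper also spells out, is the explicit index bookkeeping $\left(X_1SX_2^{-1}\right)_{ij}=\xi_i\phi_j s_{\sigma^{-1}(i),\tau(j)}$, but your argument handles this correctly.
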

\begin{proof}
We embed $N_k\times N_\ell$ in $\GL_{k+\ell}(\K)$ in the obvious way, as block matrices:
$$N_k\times N_\ell\cong\begin{pmatrix} N_k & 0\\
0& N_\ell\end{pmatrix},$$
which then act by conjugation on $y_1,y_2,s,e$. On the matrix $S$, $N_k$ acts on the left via the left multiplication while $N_\ell$ acts on the right via the right (inverse) multiplication. We also call these actions left translation and right (inverse) translation. Take elements $X_1=\sum_{i=1}^k\xi_ie_{i,\sigma(i)}\in N_k$ and $X_2=\sum_{j=1}^\ell \phi_je_{j,\tau(j)}\in N_\ell$ where $\xi_i,\phi_j\in\K^\times$, $\sigma\in S_k$, $\tau\in S_\ell$. On the matrices for $y_1, y_2, s, e$ we get the following effect:
\begin{align*}
&(X_1,X_2)\cdot y_1=\diag(a_{\sigma(1)},a_{\sigma(2)},\dots,a_{\sigma(k)},b_{\tau(1)}-1,b_{\tau(2)}-1,\dots, b_{\tau(\ell)}-1),\\
&(X_1,X_2)\cdot y_2=\diag(a_{\sigma(1)}-1,a_{\sigma(2)}-1,\dots,a_{\sigma(k)}-1,b_{\tau(1)},b_{\tau(2)},\dots, b_{\tau(\ell)}),\\
&(X_1,X_2)\cdot s=\begin{pmatrix}X_1&0\\0&X_2\end{pmatrix} \begin{pmatrix}-\Id_k&S\\0&\Id_\ell\end{pmatrix}\begin{pmatrix}X_1^{-1}&0\\0&X_2^{-1}\end{pmatrix}=\begin{pmatrix} -\Id_k & X_1SX_2^{-1}\\ 0&\Id_\ell\end{pmatrix},\\
&(X_1,X_2)\cdot e=\begin{pmatrix}X_1&0\\0&X_2\end{pmatrix} \begin{pmatrix}0&E\\0&0\end{pmatrix}\begin{pmatrix}X_1^{-1}&0\\0&X_2^{-1}\end{pmatrix}=\begin{pmatrix} 0& X_1EX_2^{-1}\\ 0&0\end{pmatrix}.\\
\end{align*}

 The action on pairs $(S,\ab)$ is given explicitly by: 
$$
(X_1,X_2)\cdot (S,\ab)=( X_1SX_2^{-1},( a_{\sigma(1)},\dots  ,  a_{\sigma(k)},   b_{\tau(1)},\dots,b_{\tau(\ell)} )),
$$
where the effect of the action $S\rightarrow X_1SX_2^{-1}$ is to permute the rows of $S$ by $\sigma$ and the columns by $\tau^{-1}$, then to multiply the $i$'th row of the resulting matrix by $\xi_i$ and the $j$'th column by $\phi_j$. That is, we have $\left(X_1SX_2^{-1}\right)_{ij}=\xi_i\phi_js_{\sigma^{-1}(i),\tau(j)}$. 

We need to check that $X_1SX_2^{-1}$ is also a rhizomatic matrix. The minimal relations generating the relation $s_{ij}\sim s_{rp}$ in Section \ref{rhizomatic} are of the form $s_{ij}\sim s_{ir}$ and $s_{ij}\sim s_{pj}$, i.e. the relations given by two nonzero entries being in the same row, and two nonzero entries being in the same column. Since $\xi_i,\phi_j\neq 0$, $\left(X_1SX_2^{-1}\right)_{ij}\neq 0$ if and only if $s_{\sigma^{-1}(i),\tau(j)}\neq 0$. Therefore $\left(X_1SX_2^{-1}\right)_{ij}\sim \left(X_1SX_2^{-1}\right)_{ir}$ if and only if $s_{\sigma^{-1}(i),\tau(j)}\sim s_{\sigma^{-1}(i),\tau(r)}$, and $\left(X_1SX_2^{-1}\right)_{ij}\sim \left(X_1SX_2^{-1}\right)_{pj}$  if and only if $s_{\sigma^{-1}(i),\tau(j)}\sim s_{\sigma^{-1}(p),\tau(j)}$. It then follows that $\left(X_1SX_2^{-1}\right)_{ij}\sim \left(X_1SX_2^{-1}\right)_{pr}$ if and only if $s_{\sigma^{-1}(i),\tau(j)}\sim s_{\sigma^{-1}(p),\tau(r)}$. Thus $X_1SX_2^{-1}$ has a single equivalence class of entries since $S$ does. Since a nonzero entry appears in every row and column of $S$, the same is true for $X_1SX_2^{-1}$. Therefore $X_1SX_2^{-1}$ is again rhizomatic, and we indeed get an action.
\end{proof}

\begin{theorem}\label{iso2}
Let $V_1,V_2\in \widetilde{\mathcal{V}}_{k,\ell}$ be two indecomposable calibrated representations of dimension $k+\ell$ with regular eigenvalues. Then $V_1\cong V_2$ as $\sv_2$-representations if and only if $V_1$ and $V_2$ are in the same $(N_k\times N_\ell)$-orbit. Thus $\mathcal{V}_{k,\ell}:=\widetilde{\mathcal{V}}_{k,\ell}/(N_k\times N_\ell)$ parametrizes the isomorphism classes of indecomposable $(k+\ell)$-dimensional calibrated $\sv_2$-representations with regular eigenvalues.
\end{theorem}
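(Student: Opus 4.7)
The plan is to prove the two directions of the biconditional separately, where the ``same orbit $\Rightarrow$ isomorphic'' direction is essentially bookkeeping from Theorem \ref{iso1}, and the reverse direction is a careful analysis of intertwiners using ideas already employed in the proof of Theorem \ref{main2}\eqref{main2-1}.

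For the forward direction, suppose $V_1, V_2 \in \widetilde{\mathcal{V}}_{k,\ell}$ lie in the same $(N_k\times N_\ell)$-orbit, say $V_2 = (X_1, X_2)\cdot V_1$ with $V_1 = V_{k,\ell}(S_1;\ab_1)$ and $V_2=V_{k,\ell}(S_2;\ab_2)$. Embedding $N_k\times N_\ell$ diagonally into $\GL_{k+\ell}(\K)$ as in Theorem \ref{iso1}, I would simply observe that conjugation by $\diag(X_1,X_2)$ carries the matrix representing each generator $y_1,y_2,s,e$ in $V_1$ to the corresponding matrix in $V_2$, and therefore defines an $\sv_2$-module isomorphism.

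For the converse, suppose $\phi\colon V_1\to V_2$ is an isomorphism of $\sv_2$-representations, regarded as an element of $\GL_{k+\ell}(\K)$ using the distinguished eigenbases. First, since $\phi$ commutes with $y_1-y_2=\diag(\Id_k,-\Id_\ell)$ acting the same way on both sides, $\phi$ is forced to be block-diagonal, $\phi=\diag(\phi_1,\phi_2)$ with $\phi_1\in\GL_k(\K)$ and $\phi_2\in\GL_\ell(\K)$. Next, using the intertwining relation $\phi\, y_1 = y_1\,\phi$ along with the hypothesis that $\ab_1,\ab_2\in\K^{k+\ell,\mathrm{reg}}$, the $k$ diagonal entries $a^{(1)}_1,\dots,a^{(1)}_k$ and the $k$ diagonal entries $a^{(2)}_1,\dots,a^{(2)}_k$ are each pairwise distinct, so the standard argument (identical to the one determining $\End(V)$ in Theorem \ref{main2}\eqref{main2-1}) shows that $\phi_1$ must be a monomial matrix; that is, $\phi_1\in N_k$ and there is a unique permutation $\sigma\in S_k$ with $a^{(2)}_i=a^{(1)}_{\sigma(i)}$ for $i=1,\dots,k$. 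An identical argument with $\phi_2$ gives $\phi_2\in N_\ell$ and a permutation $\tau\in S_\ell$ with $b^{(2)}_j=b^{(1)}_{\tau(j)}$. In particular $(\phi_1,\phi_2)\cdot\ab_1=\ab_2$ in the notation of Theorem \ref{iso1}.

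Finally, I would extract the matrix equation from the intertwining relation $\phi\, s\,\phi^{-1}=s$: reading off the upper-right $k\times\ell$ block of the product
\[
\begin{pmatrix}\phi_1 & 0\\0 & \phi_2\end{pmatrix}\begin{pmatrix}-\Id_k & S_1\\0 & \Id_\ell\end{pmatrix}\begin{pmatrix}\phi_1^{-1} & 0\\0 & \phi_2^{-1}\end{pmatrix}=\begin{pmatrix}-\Id_k & S_2\\0 & \Id_\ell\end{pmatrix}
\]
yields $\phi_1 S_1\phi_2^{-1}=S_2$, which is exactly the $N_k\times N_\ell$-action on the rhizomatic matrix from Theorem \ref{iso1}. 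The intertwining relation for $e$ requires no separate verification since $e=y_1s+1-sy_2$ is a polynomial in the other generators. Thus $(\phi_1,\phi_2)\cdot(S_1,\ab_1)=(S_2,\ab_2)$, and $V_1,V_2$ lie in the same orbit. The hardest step is the monomial-matrix conclusion, but this is essentially a repetition of the centralizer computation used in Theorem \ref{main2}\eqref{main2-1}, so the main work of the theorem has already been done; the rest is assembling the pieces.
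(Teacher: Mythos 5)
Your proposal is correct and follows essentially the same route as the paper: the forward direction is exactly Theorem \ref{iso1}, and the converse extracts from the intertwining relations with $y_1$ and $y_2$ (using regularity) that the isomorphism lies in $N_k\times N_\ell$, after which the $s$-relation gives $\phi_1 S_1\phi_2^{-1}=S_2$. You simply spell out in more detail the step the paper compresses into ``the first two equations imply $A\in N_k\times N_\ell$.''
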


\begin{proof} Let the matrices of the generators $y_1,y_2,s$ acting on $V_i$, $i=1,2$, be given by 
\begin{align*}
y_1^{(i)}&=\diag(a_1^{(i)},\dots, a_k^{(i)},b_1^{(i)}-1,\dots,b_\ell^{(i)}-1),\\
y_2^{(i)}&=\diag(a_1^{(i)}-1,\dots, a_k^{(i)}-1,b_1^{(i)},\dots,b_\ell^{(i)}),\\
s^{(i)}&=\begin{pmatrix}-\Id_k & S^{(i)}\\0&\Id_\ell\end{pmatrix}.
\end{align*}
By definition, $V_1\cong V_2$ if and only if there exists $A\in\GL_{k+\ell}(\K)$ such that 
$$
A\left(y_1^{(1)}\right)A^{-1}=y_1^{(2)},\qquad
A\left(y_2^{(1)}\right)A^{-1}=y_2^{(2)},\qquad
A\left(s^{(1)}\right)A^{-1}=s^{(2)}.
$$
The first two equations imply that $A\in N_k\times N_\ell$. Thus $V_1\cong V_2$ implies that $V_1$ and $V_2$ are in the same $(N_k\times N_\ell)$-orbit. The converse follows from the previous theorem which showed that conjugation by $N_k\times N_\ell$ respects the $\sv_2$-action.
\end{proof}

\section*{Acknowledgments}{\footnotesize
This material is based upon work supported by the National Science Foundation under Grant No. DMS-1440140, and the National Security Agency under Grant No. h98230-18-1-0144 while the authors were in residence at the Mathematical Sciences Research Institute in Berkeley, California, during the Summer of 2018.  We thank Vera Serganova and Inna Entova-Aizenbud for stimulating discussions during that visit. 
}

\end{document}